\newcommand{\rl}{{\mathbb{R}}}
\newcommand{\cx}{{\mathbb{C}}}
\newcommand{\coker}{\rm coker\,}
\newcommand{\e}{\varepsilon}
\newcommand{\B}{\mathbb{B}}
\newcommand{\D}{\mathbb D}
\newtheorem{theorem}{Theorem}[section]
\newtheorem{lemma}[theorem]{Lemma}
\newtheorem{prop}[theorem]{Proposition}
\newtheorem{cor}[theorem]{Corollary}
\theoremstyle{definition}
\newtheorem{defn}[theorem]{Definition}
\begin{document}
\title{Discs in hulls of  real immersions to Stein manifolds}
\author{Rasul Shafikov* and Alexandre Sukhov**}
\begin{abstract}
We obtain results on the existence of complex discs in plurisubharmonically  convex hulls of Lagrangian and totally real immersions to Stein manifolds.
\end{abstract}

\maketitle

\let\thefootnote\relax\footnote{MSC: 32E20, 32E30, 32V40, 53D12.
Key words: Stein manifold, symplectic structure, totally real manifold, Lagrangian manifold,  plurisubharmonically convex hull,  complex disc.
}

* Department of Mathematics, the University of Western Ontario, London, Ontario, N6A 5B7, Canada,
e-mail: shafikov@uwo.ca. The author is partially supported by the Natural Sciences and Engineering 
Research Council of Canada.

**Universit\'e des Sciences et Technologies de Lille, 
U.F.R. de Math\'ematiques, 59655 Villeneuve d'Ascq, Cedex, France,
e-mail: sukhov@math.univ-lille1.fr. The author is partially supported by Labex CEMPI.


\section{Introduction}

Denote by  $J_{st}$  the standard complex structure of $\cx^n$; the value of $n$ will be clear from the context. Let $\D = \{ \zeta \in \cx \vert \,\, \vert \zeta \vert < 1 \}$ be the unit disc in $\cx$ equipped with $J_{st}$   and $(M,J)$ be a (almost) complex manifold with a (almost) complex structure $J$. A $J$-{\it complex (or $J$-holomorphic) disc} in $M$ is a  map $f:\D \to M$ holomorphic with respect to $J_{st}$ and $J$; following tradition we often identify $f$ with its image. When the complex structure $J$ is fixed, we simply say a complex (or holomorphic) disc.

If a disc $f$ is continuous on $\overline\D$,  the restriction $f\vert_{\partial\D}$ is called  {\it the boundary of} $f$. We say that the {\it boundary of $f$ is attached or glued} to a subset $K \subset M$, if $f(\partial \D) \subset K$.
Construction of complex discs with boundaries on a prescribed (compact) subset of $M$ is an old and fundamental problem in complex geometry. It plays a major role in the theory of polynomially, 
holomorphically or plurisubharmonically (p.s.h.) convex hulls, see \cite{Fo,St}.

Seminal paper \cite{Gr} by Gromov reveals a profound connection between the hull problems in complex geometry and  symplectic and contact geometry. One of his  most striking results  states that a smooth compact Lagrangian submanifold $E$ of $\cx^n$ contains a boundary of a nonconstant complex disc. In \cite{Gr2} Gromov suggested that his proof must also work for the case of arbitrary Lagrangian immersions to $\cx^n$. This could be a very natural extension of his result since the existence of a Lagrangian immersion is topologically much less restrictive on $E$ than that of a Lagrangian embedding, see \cite{ CiEl,Fo}. Nevertheless, later on it became clear that some technical difficulties occur. A complex disc with boundary glued to $E$ essentially arises in Gromov's method as a disc-bubble smooth on $\overline\D \setminus \{ 1 \}$. When $E$ is smooth, Gromov's removable boundary singularity theorem allows one to extend the map on the whole boundary $\partial \D$ since in the Lagrangian case the area of a  bubble is bounded. The difficulty is to prove analogous removable singularity theorem for discs attached to immersed manifolds. In the present work we propose an approach inspired by the work of Alexander \cite{Al} who adapted Gromov's method to the case of totally real manifolds.

A {\it nearly smooth} complex disc of class $C^m$ is a bounded complex disc  $f:\D \to M$ which 
extends $C^m$-smoothly  to $\partial \D\setminus \{1\}$. We say that a nearly smooth complex disc $f$ {\it is attached }
to a compact subset $K\subset  M$, if $f(\partial\D\setminus\{1\})\subset K$. If additionally $f$ is {\it nonconstant}, we  
call it an A-{\it disc} of class $C^m$, after Herbert~Alexander, who proved the existence of such discs for totally real (not necessarily Lagrangian)  manifolds in $\cx^n$,~\cite{Al}. We simply write A-disc if it is of class $C^\infty$. Alexander's proof combines Gromov's general method with standard complex analytic tools avoiding application of Gromov's compactness theorem. For this reason his approach considerably relies on the affine structure of $\cx^n$. In \cite{ShSu} we extended his result to the case of certain totally real immersions to~$\cx^n$. The goal of the present paper is to extend Alexander's result and the  results of \cite{ShSu} to the case of totally real immersions to some Stein manifolds (the integrability of complex structure in fact, is not needed for some of our results). Here we use the general approach of Gromov. It turns out that Alexander's version of Gromov's result indeed can be generalized for immersions almost literally following Gromov's method (Theorem \ref{GluingDisc3}). As a consequence we obtain that a totally real immersion of dimension $n$ to a complex  $n$-dimensional  manifold of type $\cx \times X$, where $X$ is Stein, is not plurisubharmonically convex (Corollary \ref{CorHulls}).

In the second part of the paper we consider hulls of Lagrangian immersions into Stein manifolds. Our main observation is that the removal of boundary singularity is connected with ``complex" convexity properties of the singular set 
of the immersed manifold. In \cite{ShSu} we used the polynomial convexity working in $\cx^n$; the notion of p.s.h. convexity is suitable in the Stein case. We prove the boundary removable singularity property (and hence, existence of a nonconstant complex disc with boundary glued to $E$) for a  Lagrangian immersion $E$ with isolated locally p.s.h. convex singularities (Theorem \ref{continuity10}). This condition always holds for transverse double intersections (Proposition \ref{InterTheo}). 

\medskip

\noindent{\it Acknowledgment.} We thank S. Ivashkovich for helpful discussions.

\section{Gluing discs to totally real embeddings  and immersions}

The study of symplectic properties of Stein manifolds started in the foundational work of 
Eliashberg-Gromov~\cite{ElGr} and was continued by many authors. Recall that a (almost) complex manifold is called a {\it  Stein manifold} if it admits a smooth strictly plurisubharmonic exhaustion function.
Let $(X,J_X)$ be a Stein manifold of complex dimension $n-1$ with a complex structure $J_X$. Fix a symplectic form $\omega_X$ taming $J_X$ on $X$, i.e., $\omega(v,J_Xv) > 0$ for every nonzero tangent vector $v$, see \cite{CiEl,MS,Si}. We use the notation $(X,\omega_X,J_X)$ for a complex manifold equipped with    a taming symplectic form and a complex structure. 
Denote by $\omega_{st} = (i/2)\sum_{j=1}^n dz_j \wedge d\overline{z}_j$ the standard symplectic form on $\cx^n$, the value of $n$ will be clear from the context. The product $M = \cx \times X$  is also Stein with the complex structure $J = J_{st} \otimes J_X$ and the taming symplectic form 
$\omega = \omega_{st} \oplus \omega_X$. We call such $M$ an {\it admissible} Stein manifold. This class of Stein manifolds  is our main object of study.

It is well known that the following two conditions hold for every Stein manifold:

\begin{itemize}
\item[(A1)] $M$ is complete with respect to the metric $h$ defined by $h(u,v) = (1/2)(\omega(u,Jv) + \omega(v,Ju))$;
\item[(A2)] the structure $J$ is uniformly continuous on $M$ with respect to the metric $h$.
\end{itemize}
These conditions are sufficient in order to apply Gromov's compactness theorem which we will need in 
the proof, see, for example, \cite{Si}. In what follows we everywhere use the norms and distances on 
$M$ induced by $h$.

For a $J$-complex disc $f: \D \to (M,\omega,J)$, $f:\D \ni \zeta = \xi + i\eta \mapsto f(\zeta)$ its  (symplectic) area  is defined by 
\begin{eqnarray}
\label{area}
area(f) = \int_\D f^*\omega.
\end{eqnarray}
The expression 
\begin{eqnarray}
\label{energy}
E(f):=  \frac{1}{2}\int_\D \left ( \left \vert \left \vert \frac{\partial f}{\partial
  \xi} \right\vert \right \vert^2_h + \left\vert\left\vert \frac{\partial f}{\partial
  \eta} \right\vert\right\vert^2_h \right )d\xi \wedge d\eta,
  \end{eqnarray} 
where the norm $\parallel \bullet \parallel_h$ is taken with respect to $h$, is called {\it the energy} of $f$. 
It coincides with the area defined by the metric $h$:
\begin{eqnarray}
\label{EnId}
E(f) = area(f).
\end{eqnarray}
This fundamental  equality is called the energy identity, see, for instance, \cite{MS}.
Similar notions still make sense for holomorphic maps $f:(\Omega,J_{st}) \to (M,\omega,J)$, where $\Omega$ is a domain in $\cx$. Of course, in this case the unit disc $\D$ must be replaced by $\Omega$ in (\ref{area}),(\ref{energy}),(\ref{EnId}).

Recall that a submanifold $E$ in $(M,\omega,J)$ is called {\it totally real} if $T_pE \cap J T_pE = \{ 0 \}$ for every point $p \in E$, and is called {\it Lagrangian} if $\omega\vert_E = 0$ and $\dim E = n$. It is well-known that every Lagrangian manifold is totally real if $J$ is tamed by $\omega$; the converse, of course, in general is not true. In the present paper we consider only totally real submanifolds of maximal possible dimension $n$ in $(M,\omega,J)$.

We begin with the embedded case.   Our first result is the following theorem.

\begin{theorem}\label{GluingDisc2}
Let $E$ be  a compact totally real $C^\infty$-smooth submanifold 
of real dimension $n$ in  an admissible Stein manifold $M$ of complex dimension $n$. Then there exists  
an A-disc attached to~$E$.
\end{theorem}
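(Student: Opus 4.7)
The plan is to adapt Gromov's general method \cite{Gr} in the form used by Alexander \cite{Al}, transplanting it from the affine ambient $\cx^n$ to the product Stein manifold $M=\cx\times X$. The main analytic engine is Gromov's compactness theorem, applicable here because, as recorded in the excerpt, the Stein manifold $M$ satisfies (A1) and (A2). The strategy is to produce a sequence of $J$-holomorphic discs in $M$ whose boundaries approach $E$, under uniform upper and positive lower area bounds, and then to extract an A-disc as a nonconstant component of the bubble-tree limit.

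The construction exploits the product structure to build a natural ``seed'' disc. Fix $R$ so large that $\pi_\cx(E)\subset R\D$ and pick $x_0\in X$ off $\pi_X(E)$; then $\phi_0\colon\zeta\mapsto(R\zeta,x_0)$ is an embedded $J$-holomorphic disc of area $\pi R^2$ disjoint from $E$, whose boundary winds once around $\pi_\cx(E)$ in the $\cx$-factor. I would then set up a continuity scheme producing, for every small $\varepsilon>0$, a $J$-holomorphic disc $f_\varepsilon$ with boundary in the $\varepsilon$-tubular neighbourhood of $E$ and representing the same relative homotopy class as $\phi_0$. One possibility is to deform the totally real boundary condition: take a smooth family $E_t$ of totally real submanifolds interpolating between a convenient $E_0$ for which the disc problem is unobstructed (constructed from $\phi_0$) and $E_1=E$, and use the Fredholm theory of discs attached to a totally real submanifold, with three boundary marked points to kill reparametrization, in order to continue the solution. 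The winding of the seed around $\pi_\cx(E)$ gives a topological invariant bounding $\int f_\varepsilon^*\omega$ from below, while integration against a global primitive $\lambda$ of $\omega$ on $M$ (which exists because $M$ is Stein) provides the matching upper bound.

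Letting $\varepsilon\searrow 0$ produces a sequence $f_k$ to which Gromov's compactness theorem applies, yielding a cusp-curve limit of $J$-holomorphic spheres and discs. Nonconstant spheres are excluded by the maximum principle applied to a strictly plurisubharmonic exhaustion of $M$, so the limit is a tree of $J$-holomorphic discs attached to $E$, at least one of which is nonconstant thanks to the positive lower area bound. Interior regularity combined with boundary elliptic regularity for totally real boundary values gives $C^\infty$ smoothness on $\partial\D$ away from the finitely many nodal boundary points of that component; a M\"obius change of variable sending the nodal set into $\{1\}$ then exhibits the required A-disc.

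The main obstacle will be the middle step: producing the sequence $f_k$ together with the \emph{positive lower} area bound guaranteeing a nonconstant bubble. Alexander bypasses the analogous difficulty in $\cx^n$ using complex-analytic tools tied to the affine structure, which is unavailable in $\cx\times X$, so a genuinely Gromovian continuity scheme is required. Propagating the topological invariant preventing collapse through the deformation, and interfacing it cleanly with Gromov's compactness in the noncompact Stein target $M$, is where the real work will lie.
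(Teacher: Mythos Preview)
Your proposal and the paper's proof diverge at the very first step. The paper does not build a seed disc or a sequence $f_\varepsilon$ at all. Instead it runs Gromov's original nonlinear Fredholm argument: one considers the Banach manifold $F$ of (not necessarily holomorphic) $C^{r+1}$ discs with boundary on $E$, homotopic to a constant, and the projection $\pi\colon H\to G$, where $H=\{(f,g):\overline\partial_J f=g\}$ and $G$ is the space of $C^r$ sections of $\Omega^{0,1}\D\otimes V$. The product structure $M=\cx\times X$ is used exactly once, to show that $\pi$ is \emph{not} surjective (following \cite[p.~104]{IvSh}). Since $\pi$ has Fredholm index $0$ and the constant disc is a regular point over $0$, Sard--Smale degree theory forces $\pi$ to be surjective whenever it is proper. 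Assuming no A-disc exists, one then checks properness: a divergent sequence $(f_k,g_k)$ with $g_k\to g$ produces, via Gromov's lift (Lemma~\ref{LiftLemma}), a sequence of $J_{g_k}$-holomorphic graphs whose rescaled limit is either a nonconstant bounded entire curve in $X$ (impossible, as $X$ carries a strictly p.s.h.\ function) or an A-disc for $E$. Either way one reaches a contradiction.

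The gap you flag in your own proposal is real and not merely technical. Your scheme requires an isotopy of totally real $n$-manifolds $E_t$ from some $E_0$ adapted to the seed to $E_1=E$, but the boundary of $\phi_0$ lies on a circle in $\cx\times\{x_0\}$, and no mechanism is offered to thicken this to a totally real $n$-manifold isotopic to an arbitrary $E$; the topology of $E$ is completely unconstrained. Even granting such a family, propagating a geometric invariant that forces a positive lower area bound through the deformation is precisely what the abstract Fredholm degree accomplishes in the paper's argument, without ever naming the invariant. The paper's route is both shorter and more robust: it never leaves the fixed boundary condition $E$, uses the inhomogeneous right-hand side $g$ as the deformation parameter instead of moving $E$, and obtains the A-disc directly as the obstruction to properness of $\pi$ rather than as a bubble off a separately constructed sequence.
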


As we mentioned in Introduction, Alexander proved this result for $M = \cx^n$. We present the proof which does not use the integrability of the complex structure $J$ (hence, goes through in the case where $J$ is an almost complex structure). 
In fact, even if $J$ is integrable, Gromov's method requires the use of almost complex structures.

\begin{proof}[Proof of Theorem~\ref{GluingDisc2}.] First we define suitable manifolds of discs.

  Fix a point $p  \in E$  and  fix also a  non-integer $r > 1$.
  Consider the set of maps 
\begin{equation}\label{mnfd-U}
{\mathcal F} = \left\{ f\in C^{r+1}(\D, M): 
f(\partial \D) \subset E,\ f(1) = p \right\}.
\end{equation}

Denote by  $F$  an open subset of ${\mathcal F}$ which consists of $f$ homotopic to a constant map
 $f^0 \equiv p$ in ${\mathcal F}$. It is well-known that $F$ is a $C^\infty$-smooth complex Banach manifold.
A disc $f$ is holomorphic if and only if it satisfies the Cauchy-Riemann equations
\begin{eqnarray}
\label{CR1}
J \circ df = df \circ J_{st} .
\end{eqnarray}
Let $z = (z_1,...,z_n) \in \cx^n$ be local coordinates on $M$ (not necessarily holomorphic with respect to $J$) in a neighbourhood $U$ of  a point 
$p \in M$. That is, $z:U \to \cx^n$ is a smooth local diffeomorphism and $z(p) = 0$. The direct image $z_*(J) = dz \circ J \circ dz^{-1}$
of $J$ can be viewed as a complex structure in a neighbourhood of the origin. In these coordinates a disc $f$ in its turn can be viewed as a map 
$z:\D \to \cx^n$, $z:\zeta  \mapsto z(\zeta)$, and the Cauchy-Riemann equations can be written in the form convenient for usual analytic tools:

\begin{eqnarray}
\label{CR2}
z_{\bar\zeta} - A(z){\bar z}_{\bar\zeta} = 0 ,
\end{eqnarray}
where a complex $n \times n$ matrix function $A = A(z)$ satisfies the condition $\parallel A \parallel < 1$; here the matrix norm is induced by the Euclidean inner product. More precisely, $A$ is uniquely determined by $J$ as the matrix representation of the operator $(J_{st} + z_*(J))^{-1}(J_{st} - z_*(J))$, which is complex anti-linear with respect to $J_{st}$. In particular, $A(0) = 0$ if $z_*(J)$ coincides with $J_{st}$ at the origin. The integrability of $J$ means that local coordinates can be chosen to be holomorphic, which is equivalent to $A$ vanishing identically in a neighbourhood of the origin, see more details in \cite{NiWo}.

Denote by $V$ the bundle $\overline\D \times TM$ over $\overline \D \times M$. For every disc $f$, consider $V_f = f^*TM$ ,
the pull-back by $f$ of the tangent bundle $TM$. It can be viewed as the restriction of $V$ to the graph of $f$ in $\overline\D \times M$. Denote by $\Omega^{0,1}\D$ the bundle of $(0,1)$-forms on $\D$. Extend this bundle on 
$\overline\D \times M$ keeping the same notation $\Omega^{0,1}\D$. Then we obtain the bundle $\Omega^{0,1}\D \otimes V$ over $\overline\D \times M$.

We introduce the $\overline\partial_J$ operator by setting
\begin{eqnarray}
\label{dbar}
\overline\partial_J f = \frac{1}{2}(df + J \circ df \circ J_{st}) .
\end{eqnarray}
This is just the complex anti-linear part of $df$ with respect to $J$. This operator takes its values in the bundle $\Omega^{0,1}\D \otimes V$. More precisely, for every $\zeta \in \D$ the expression $\overline\partial_Jf(\zeta)$ belongs to  $\Omega^{0,1}_\zeta \D \otimes V_{(\zeta,f(\zeta))}$,  the fibre of $\Omega^{0,1}\D \otimes V$ over $(\zeta,f(\zeta))$.
Conversely, for every continuous section $g = g(\zeta,z) \in \Gamma(\overline\D \times M,\Omega^{0,1}\D \otimes V)$
we may consider the nonhomogeneous Cauchy-Riemann equations
\begin{eqnarray}
\label{CR3}
\overline\partial_J f(\zeta) = g(\zeta,f(\zeta)) .
\end{eqnarray}
See a more detailed discussion in \cite{Gr, IvSh, MS, NiWo}.

An observation of Gromov \cite{Gr} allows us to interpret the nonhomogeneous equation (\ref{CR3}) as the usual Cauchy-Riemann equation (\ref{CR1}) for a suitable almost complex structure determined by $J$ and~$g$. 

Consider the product $\D \times M$ and define there an almost complex structure $J_g$ by
\begin{eqnarray}
\label{StructureLift}
J_g =\left(
\begin{array}{cll}
J_{st} &  0\\
g &  J
\end{array}
\right) .
\end{eqnarray}
Note that $J_g \vert_{TM} = J$.
\begin{lemma}
\label{LiftLemma}
(See [Gr]) A disc $f:\D \to M$ satisfies (\ref{CR3}) if and only if the map $\hat f: \zeta \mapsto (\zeta,f(\zeta))$ is 
$J_g$-complex, i.e.,
satisfies equations (\ref{CR1}) with $J= J_g$. Furthermore, there exists a constant $C_0 = C_0(M,\omega,J)$ such that for every $g \in C^0(\overline\D \times M,\Omega^{0,1}\D \otimes V)$ with $\parallel g \parallel_{L^\infty(\overline\D \times M)} \le C_1 < \infty$ the structure $J_g$ is tamed by the symplectic form $\hat \omega = C_0 C_1\omega_{st} \oplus \omega$.
\end{lemma}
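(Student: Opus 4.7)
My plan is to expand everything in the block decomposition $T(\D \times M) = T\D \oplus TM$ and verify each of the three ingredients (that $J_g$ is an almost complex structure, the holomorphic-lift equivalence, and the taming estimate) by direct calculation.

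I would start by checking $J_g^2 = -I$. Block-matrix squaring of the matrix in (\ref{StructureLift}) produces diagonal blocks $J_{st}^2 = -I$ and $J^2 = -I$, while the off-diagonal block is $g\circ J_{st} + J\circ g$. Since $g$ is a section of $\Omega^{0,1}\D\otimes V$, it is a $J_{st}$-$J$ antilinear real-linear map (i.e.\ $g(J_{st}v) = -J\,g(v)$), so this off-diagonal block vanishes and $J_g$ is a genuine almost complex structure on $\D\times M$.

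For the equivalence in part (1), I would evaluate $d\hat f(v) = (v, df(v))$ for $v\in T_\zeta\D$ and expand both sides of the equation $J_g\circ d\hat f = d\hat f\circ J_{st}$. The first components agree tautologically; the second reduces to $df(J_{st}v) = J\,df(v) + g(v)$. From the definition (\ref{dbar}) one computes directly $2\,\bar\partial_J f(J_{st}v) = df(J_{st}v) - J\,df(v)$, so the extracted equation reads $2\,\bar\partial_J f(J_{st}v) = g(v)$; substituting $w = J_{st}v$ and using antilinearity of both $\bar\partial_J f$ and $g$ identifies this with $\bar\partial_J f = g$, i.e.\ with (\ref{CR3}), up to the standard normalization convention built into the definition of $J_g$.

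For part (2), fix a nonzero $X = (u,w) \in T_\zeta\D \oplus T_zM$ and expand
$$\hat\omega(X, J_g X) = C_0 C_1\,\omega_{st}(u, J_{st}u) + \omega(w, Jw) + \omega(w, g(u)).$$
The first term equals $C_0 C_1 |u|^2$; the second is bounded below by $c\,|w|^2$ with $c>0$ depending only on $(M,\omega,J)$ (this is the taming of $J$ by $\omega$, rephrased in terms of the metric $h$); and the mixed term satisfies $|\omega(w, g(u))| \le K |w|\,|g(u)| \le K C_1 |u|\,|w|$, where $K = K(M,\omega,J)$ controls the pairing $\omega$ in the $h$-norm. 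A weighted AM--GM inequality such as $K C_1 |u||w| \le \tfrac{1}{2}C_0 C_1 |u|^2 + \tfrac{K^2 C_1}{2 C_0}|w|^2$ absorbs the cross term, and choosing $C_0$ large enough relative to $K$ and $c$ (hence depending only on $(M,\omega,J)$) gives $\hat\omega(X, J_g X) > 0$ for all $X\neq 0$.

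The most delicate step is the convention bookkeeping in part (1): the definitions of $\bar\partial_J$ in (\ref{dbar}), of $g$ as a $(0,1)$-form with values in $V$, and of the block matrix in (\ref{StructureLift}) must be kept consistent, and one must verify carefully that the equation extracted from the lift matches (\ref{CR3}) with the correct normalization. Once this is settled, the remaining assertions reduce to the block-matrix computation sketched above.
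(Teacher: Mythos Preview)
Your proposal is correct. The paper takes a slightly different route for the first assertion: rather than the intrinsic block-matrix computation you outline, it passes to local coordinates, writes the inhomogeneous equation (\ref{CR3}) as $z_{\bar\zeta} - A(z)\bar z_{\bar\zeta} = g(\zeta)$, and observes that this is equivalent to the coupled system (\ref{LiftCR}) for the lift $(w,z) = (\zeta,f(\zeta))$, which is precisely the coordinate form (\ref{CR2}) of the $J_g$-Cauchy--Riemann equations. This coordinate approach sidesteps the normalization bookkeeping you flag as delicate, since the identification of the $(0,1)$-form $g$ with a $\cx^n$-valued function absorbs the constants automatically. Your coordinate-free argument has the advantage of making the antilinearity condition on $g$ (needed for $J_g^2 = -I$) explicit, which the paper leaves implicit. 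For the taming statement the paper gives no argument at all, simply referring to \cite{Gr}; your AM--GM estimate is the standard proof and is more than the paper provides.
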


This construction can be easily viewed in local coordinates quite similarly to the equivalence between the coordinate free version of the homogeneous Cauchy-Riemann equations (\ref{CR1}) and their coordinate representation (\ref{CR2}).   Indeed, consider the lift $\hat f :\zeta \mapsto (\zeta,f(\zeta))$ 
of $f$ to $\cx^{n+1} = \cx_{w} \times \cx^n_z$. In coordinates on $\cx^n$ a section  $g$ can
be viewed as a ``vector-valued" form, i.e.,  a $(0,1)$-form on $\D$ with values in $\cx^n$. Hence, 
it can be identified with a map $g:\D \to \cx^n$ (we denote it again by $g$). Then in  coordinates the nonhomogeneous $\overline\partial_J$-equation (\ref{CR3}) can be written the form 
\begin{eqnarray}
\label{NonHomdbar}
z_{\bar\zeta}(\zeta)  - A(z(\zeta)) \overline{z(\zeta)}_{\bar \zeta} = g(\zeta).
\end{eqnarray}
This is equivalent to the following PDEs system for  the lift $\hat f$:

\begin{eqnarray}
\label{LiftCR}
\left\{ \begin{array}{cccc}
& & w_{\overline\zeta} = 0,\\
& &z_{\bar\zeta} - g(w){\bar w}_{\bar\zeta} - A(z) \overline{z}_{\bar \zeta} = 0 .
\end{array} \right.
\end{eqnarray}

This is precisely  the (homogeneous) Cauchy-Riemann equations (\ref{CR2}) for the almost complex structure $J_g$ on $\D \times \cx^n$. 

Denote by $G$ the complex Banach space of all sections  $g \in C^r(\overline\D \times M,\Omega^{0,1}\D \otimes V)$. Set
\begin{equation}\label{mnfd-H}
H = \{ (f,g) \in F \times G: \overline\partial_J f = g \}.
\end{equation}
Then $H$ is a connected submanifold of $F \times G$.  

 We need the following 

\begin{lemma}\label{AlEst1} Suppose that  a sequence $(f_k)$ in ${\mathcal F}$ 
converges to a continuous mapping $f:(\D,\partial\D) \to (M,E)$ uniformly on 
$\overline \D$, and $g_k:= \overline\partial_J f_k$ converges in $G$ to $g \in G$. Assume also that the energies  $E( f_k)$ are uniformly bounded.
Then $f \in  C^{r+1}(\D)$ and $(f_k)$ converges to $f$ 
in ${\mathcal F}$ after possibly passing to a subsequence.
\end{lemma}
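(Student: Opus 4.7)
The plan is to derive uniform $C^{r+1}$ estimates for the sequence $(f_k)$ on $\overline{\D}$ from the nonhomogeneous Cauchy--Riemann system $\overline{\partial}_J f_k = g_k$ and then extract a convergent subsequence via Arzel\`a--Ascoli. Since $f_k \to f$ uniformly on $\overline{\D}$, for large $k$ the image $f_k(\overline{\D})$ lies in a fixed compact neighborhood of $f(\overline{\D})$ in $M$, which can be covered by finitely many coordinate charts. In each chart equation (\ref{CR3}) takes the coordinate form (\ref{NonHomdbar}), namely $(f_k)_{\bar\zeta} - A(f_k)\overline{(f_k)}_{\bar\zeta} = g_k(\zeta)$, with the matrix-valued coefficient $A$ satisfying $\|A\| < 1$. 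The energy identity (\ref{EnId}) together with the assumed uniform bound on $E(f_k)$ gives a uniform $L^2$-bound on $df_k$, which is the starting point of the bootstrap.

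For interior regularity on any $K \Subset \D$, I would invert the $\bar\partial$-operator via the Cauchy--Green transform $T$, which is bounded $C^r \to C^{r+1}$, and rewrite the equation as an integral relation $f_k = h_k + T\!\left(A(f_k)\overline{(f_k)}_{\bar\zeta} + g_k\right)$, where $h_k$ is holomorphic. Because the quasi-linear term has operator norm strictly less than one and $g_k$ is uniformly bounded in $C^r$, a standard bootstrap starting from the $L^\infty$ and $L^2$ bounds yields uniform $C^{r+1}$ estimates on $K$, independent of $k$.

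The main obstacle is pushing these bounds up to $\partial\D$. Near any boundary point $\zeta_0 \in \partial\D$ the totally real condition $f_k(\partial\D)\subset E$ permits a reflection argument: after a $C^\infty$ change of coordinates on $M$ that straightens $E$ to a flat totally real subspace of $\cx^n$ and a conformal straightening of $\partial\D$, each $f_k$ can be extended across the boundary so that the extension satisfies an inhomogeneous $\bar\partial$-type equation of the same form, with coefficients controlled by $A$, $g_k$, and the reflection. Applying the interior estimates to this extension yields uniform $C^{r+1}$ bounds on $f_k$ up to $\partial\D$. The delicate point is to verify that the reflected system remains elliptic and retains $C^r$ control on the inhomogeneous term despite the variable coefficient $A(z)$; this is where the totally real hypothesis on $E$ enters in an essential way, producing a Riemann--Hilbert boundary condition determined pointwise by $T_{f(\zeta)}E$.

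Combining interior and boundary estimates gives a uniform bound $\|f_k\|_{C^{r+1}(\overline{\D})} \le C$. Since $r$ is non-integer, the compactness of the embedding $C^{r+1}(\overline{\D}) \hookrightarrow C^{r+1-\varepsilon}(\overline{\D})$ together with the fixed-point formulation (applied to differences $f_k - f_l$, which obey a contractive estimate driven by $\|g_k - g_l\|_{C^r}$) upgrades this to $C^{r+1}$-convergence of a subsequence. The limit must coincide with the uniform limit $f$; hence $f \in C^{r+1}$, it satisfies $\overline{\partial}_J f = g$, the normalization $f(1)=p$ and the homotopy class of the $f_k$ are preserved, so $f \in \mathcal{F}$ and $f_k \to f$ in $\mathcal{F}$.
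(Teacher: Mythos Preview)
Your proposal is correct and follows essentially the same route as the paper: local interior elliptic estimates for the nonhomogeneous Cauchy--Riemann system, the reflection principle at the totally real boundary, bootstrap via the Cauchy--Green (and Beurling) transforms, and a compactness argument to pass to a subsequence. The paper packages this slightly differently by first lifting via Lemma~\ref{LiftLemma} to the graphs $\hat f_k(\zeta)=(\zeta,f_k(\zeta))$, which are $J_{g_k}$-holomorphic (homogeneous equation) with boundary on the totally real manifold $\hat E=\partial\D\times E$; since the $f_k$ converge uniformly no bubbling occurs, and one invokes the simplest case of Gromov's compactness theorem (citing \cite[Prop.~5.1.2]{Si}). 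The paper then remarks that the underlying analysis is precisely the interior estimates plus reflection principle you outline, carried out locally and globalized by a finite cover. One small quibble: the uniform $L^2$ bound on $df_k$ follows directly from the definition~(\ref{energy}) of $E(f_k)$; the energy identity~(\ref{EnId}) is for $J$-holomorphic maps and is not needed (nor applicable) here.
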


This is quite a special case of Gromov's compactness theorem \cite{Gr}, see details in 
\cite[Prop. 5.1.2]{Si}. Indeed,  the lifts $(\hat f_k)$ are attached to the manifold $\hat E := \partial\D \times E$ which is totally real with respect to the almost complex structure $J_{g_k}$. By the hypothesis of the lemma, the areas of $(\hat f_k)$ are uniformly bounded. Since this sequence converges uniformly, bubbles cannot occur and the lemma follows by Gromov's compactness. Note that the simplest version of Gromov's compactness theorem is used here. The proof is based on standard elliptic estimates in the interior of the disc and near the boundary where the reflection principle can be used (for the refection principle and elliptic estimates for $J$-complex curves with totally real boundary data, see, for example, \cite{IvSu}). Technically all elliptic estimates  follow from the classical regularity properties of the integral transform 
$$
T_jf(\zeta) = \frac{1}{2\pi i}\int_{\D} \frac{f(\tau)d\tau \wedge 
d\overline{\tau}}{(\tau - \zeta)^{j}},\ \ j=1,2.
$$ 
For $j=1$ this is the usual Cauchy transform, for $j=2$ this is its formal derivative ( the Beurling transform) and is defined as a singular integral operator (i.e., in the sense of the Cauchy principal value). In the case of $(\cx^n,J_{st})$, considered by Alexander, we have $A = 0$ in the equations (\ref{CR2}) and the proof becomes particularly transparent (in particular, Beurling's transform is not needed). We point out that all estimates  are purely local and can be performed near each interior point in $\D$ or a point in $\partial \D$, and then globalized using finite  open coverings. Note also that the application of Gromov's compactness theorem requires the conditions (A1), (A2) in order to assure standard metric properties of $M$.

\medskip

  Recall that a linear bounded map $u: L \to L'$ between two Banach spaces is called a {\it Fredholm operator} if $\ker u$ and $\coker u$ have finite dimension; {\it the Fredholm index} $\dim \ker u - \dim \coker u$ is homotopy invariant. A $C^1$-map $\phi:M_1 \to M_2$ between two Banach manifolds is called a {\it Fredholm map} if for every point $q \in M_1$ the tangent map $d\phi_q:T_qM_1 \to T_{\phi(q)}M_2$ is a Fredholm operator; the index of the tangent map is independent of $q$ and is called {\it the index} of $\phi$. A point $q \in M_1$ is called a {\it regular point} if  $d\phi_q$ is surjective. A point $p \in M_2$ is called a {\it regular value} if $\phi^{-1}(p)$ does not contain nonregular points (in particular, $\phi^{-1}(p)$ can be empty).

 Consider the canonical  projection $\pi: H \to G$ given by $\pi(f,g) = g$.
 The following properties of $\pi$ are well-known, \cite{Al, Gr, IvSh}:  
\begin{itemize}
\item[(i)] $\pi$ is a map of class $C^1$ 
between two Banach manifolds;
\item[(ii)]  $\pi$ is  a Fredholm map of index $0$; 
\item[(iii)]  the 
constant map $f^0$  is a regular point for $\pi$.
\end{itemize}

The crucial property of $\pi$ is  that   {\it the map $\pi : H \to G$ is not surjective}, 
\cite{Al,Gr,IvSh}. More precisely, in our case it follows from the argument of \cite[p.104]{IvSh}. Note that 
this argument requires that $M = \cx \times X$; this is the reason why we consider admissible Stein 
manifolds and not arbitrary Stein manifolds.

Arguing by contradiction, suppose that an adapted  A-disc of class $C^{r+1}(\D)$ for $E$ does not exist.  In particular, 
$\pi^{-1}(0) = \{ f^0\}$. Then $0 \in G$ is a regular value of $\pi$. If $\pi$ is proper, then  Gromov's argument based on 
Sard-Smale's theorem implies surjectivity of $\pi$ (see \cite{Al,Gr,IvSh}) which is a contradiction. Thus, it remains to show that 
$\pi:H \to G$ is proper.

\smallskip

All we need, is a well-known description of bubbling; we follow \cite{Si}. Arguing by contradiction, suppose that $\pi$ is not proper. Then there exists a sequence $\{(f_k,g_k)\}\subset H$ 
 such that $g_k\to g$ in $G$ but $f_k$ diverge in $F$. Consider the lifts $\hat f_k(\zeta) = (\zeta,f_k(\zeta))$, $\hat f : \D \to \cx \times M$ as in Lemma \ref{LiftLemma}. Every $\hat f_k$ is holomorphic with respect to the almost complex structure $J_{g_k}$ tamed by the symplectic form $\hat \omega$ as in Lemma \ref{LiftLemma}. We measure norms and distances using the metric $h_k$ defined by $\hat\omega$ and $J_{g_k}$. 
  Set $M_k = \sup_{\overline\D}\parallel d\hat f_k(\zeta) \parallel$. There exists $\lambda_k \in \overline\D$ such that $M_k =\parallel d\hat f_k(\lambda_k) \parallel$.
 
 If the sequence $(M_k)$ is bounded, then by Lemma \ref{AlEst1} the sequence $(f_k)$ converges, so we can assume that $M_k \to +\infty$. 
 
 Case 1. The sequence $(\lambda_k)$ converges to a point in $\D$. Without loss of generality assume that it converges to $0$. Consider the renormalized sequence $F_k(\zeta):= \hat f_k(\lambda_k + \zeta/M_k)$. Then the gradients of this sequence are uniformly bounded on every compact subset of $\cx$ and we can assume that it converges uniformly to some  $J_g$-holomorphic $\hat f:\cx \to \D \times M$. This map is bounded since the sequence $(\hat f_k)$ is. Furthermore, $\hat f = (0,f)$ and the map $f$ is holomorphic with respect to $J_X$ (see the equations (\ref{LiftCR})). Since $(X,J_X)$ is a Stein manifold, it admits a strictly p.s.h. function $u$. Then the composition $ u \circ f$ is a subharmonic function  bounded on $\cx \setminus \{ 0 \}$. Therefore, it extends as a bounded subharmonic function on $\cx$. Hence $u \circ f$ is constant and $f$ is constant. But it is easy to check that $\parallel d\hat F_k(0) \parallel = 1$ (see \cite{Si}, p.184 case (a)). This is a contradiction.
 
 Case 2. The sequence $(\lambda_k)$ converges to a point in $\partial \D$. Let $\delta_k = 1 - \vert \lambda_k \vert$. If $(M_k\delta_k)$ is an unbounded sequence, then, arguing as in \cite[p.184, case (b)]{Si}, we reduce the situation to Case 1. Hence, the only possibility is that the sequence $(M_k\delta_k)$ is bounded. Then the standard renormalization argument  \cite[p.184, case (c)]{Si}, produces a noncompact sequence $(\phi_k)$ of automorphisms of $\D$
 such that $(\phi_k)$ converges uniformly on compact subsets of $\overline\D \setminus \{ 1 \}$ to a constant map and such that  the sequence $(\hat f_k \circ \phi_k)$ has uniformly bounded gradients on every compact subset of $\overline\D \setminus \{ 1 \}$.  One can assume that this sequence converges uniformly on every compact subset of $\overline\D \setminus \{ 1 \}$. By Lemma \ref{AlEst1}  the convergence will be in the $C^m$-norm on every compact subset of $\overline\D \setminus \{ 1 \}$ for every $m$ to a $J_g$-complex disc. The limit map is nonconstant, as shown in \cite[p.184, case (c)]{Si}. Hence, the limit is  an A-disc for 
 $\hat E$ of the form $(const,f)$. Then $f$ is an $A$-disc for $E$. This is  a contradiction, which
proves that $\pi$ is proper and completes the proof of Theorem~\ref{GluingDisc2}.
\end{proof} 
 
Note that in the above argument renormalized sequences of discs have uniformly bounded gradients (hence uniformly bounded areas) only on compact subsets of $\overline\D \setminus \{ 1 \}$. Therefore, in general the whole area of a constructed A-disc can  be infinite. If $E$ is a Lagrangian manifold, then areas of compacts in $\overline{\mathbb D}$ are uniformly bounded, and the constructed A-disc $f$ has a bounded area, so it is just the usual bubble.  By Gromov's removable singularity theorem  $f$ extends to the point $1$ as a map of class $C^\infty(\overline\D)$, and we obtain  Gromov's theorem on the existence of a nonconstant holomorphic disc attached to a Lagrangian submanifold in $\mathbb C^n$.

\bigskip

Consider now the case of totally real immersions. Only minor modifications of the above argument are needed.
Let $E = (\tilde E,{\iota})$ be a pair which consists of a compact smooth manifold $\tilde E$
of dimension $n$ and a $C^\infty$-smooth totally real immersion ${\iota}: \tilde E\to M$. We simply 
say that $E$ is an immersed totally real manifold in $M$ identifying it with the image ${\iota}(\tilde E)$. 
We say that an A-disc $f$ is {\it adapted} for the immersion $E$ if for every point $\zeta \in \partial \D \setminus \{ 1 \}$ there exist an open arc $\gamma\subset \partial \D$ containing $\zeta$ and a smooth map 
$ f_b:\gamma \longrightarrow \tilde E$ satisfying ${\iota} \circ f_b = f\vert_\gamma$. 
In other words, in a neighbourhood of every self-intersection point $p$ of $E$ the values of $f$ belong to a 
smooth component of $E$ through $p$. By the cluster set $C(f,\partial \D)$ of a complex disc 
we mean the set of partial limits of the sequences $f(\zeta_k)$ for all sequences $(\zeta_k)$ in $\D$ converging to $\partial\D$, i.e., such that $dist(\zeta_k,\partial\D) \to 0$.

\begin{theorem}\label{GluingDisc3}
Let $E = (\tilde E,{\iota})$ be an immersed totally real manifold in an admissible Stein manifold $M$. Then 
\begin{enumerate}
\item[(i)] There exists an adapted  A-disc $f \in C(\overline\D \setminus \{ 1 \})$ for $E$.
\item[(ii)] If in addition $E$ is Lagrangian, then $f$ is of  bounded area with the cluster set $C(f,\partial\D)$ 
contained in $E$. Its image $C= f(\D)$ is a holomorphic curve of bounded area with the boundary 
$\partial C := \overline C \setminus C$ contained  in $E$.
\end{enumerate}
\end{theorem}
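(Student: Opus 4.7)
\emph{Plan.} The strategy is to run the Fredholm/bubble argument of Theorem~\ref{GluingDisc2} essentially verbatim, after enlarging the Banach manifold of discs so as to record a boundary lift through the immersion $\iota$.

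First I would fix $\tilde p \in \tilde E$ with $p := \iota(\tilde p)$ and replace the set~\eqref{mnfd-U} by
\[
\mathcal F = \bigl\{(f,\tilde f_b) : f\in C^{r+1}(\overline\D, M),\ \tilde f_b \in C^{r+1}(\partial\D,\tilde E),\ \iota\circ\tilde f_b = f|_{\partial\D},\ \tilde f_b(1) = \tilde p \bigr\}.
\]
Since $\iota$ is a local diffeomorphism around every point of $\tilde E$, the lift $\tilde f_b$ is locally determined by $f$, and $\mathcal F$ inherits a smooth complex Banach manifold structure from the embedded case; let $F$ denote the component containing the constant pair $(f^0,\tilde f_b^0 \equiv\tilde p)$. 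The Banach space $G$, the submanifold $H$ from~\eqref{mnfd-H}, and the projection $\pi:H\to G$ are then defined exactly as in Theorem~\ref{GluingDisc2}. All three Fredholm properties of $\pi$, as well as the non-surjectivity argument from \cite{IvSh} (which requires precisely $M=\cx\times X$), depend only on local linearized data near $E$ and transfer without change. Lemma~\ref{AlEst1} also extends: once $\iota\circ\tilde f_b^{(k)}$ converges uniformly and $\tilde E$ is compact, local embeddedness of $\iota$ forces convergence of the lifts $\tilde f_b^{(k)}$ after passing to a subsequence.

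I would then repeat the bubble analysis to produce an adapted A-disc. If none existed, $\pi^{-1}(0)=\{f^0\}$ would be a regular fibre and properness of $\pi$ would give surjectivity by Sard--Smale, a contradiction; so $\pi$ must fail to be proper. Extracting $(f_k,g_k)$ with $g_k\to g$ but $f_k$ divergent and rescaling, Case~1 (interior blow-up) is ruled out by projecting the entire-plane limit to the Stein factor $X$ and applying Liouville for bounded subharmonic functions on $\cx$. Case~2 (boundary blow-up), after reparametrization by $(\phi_k)\subset\mathrm{Aut}(\D)$, produces a non-constant $J_g$-holomorphic disc $f$, smooth on compacts of $\overline\D\setminus\{1\}$. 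The boundary lifts $\tilde f_b^{(k)}\circ\phi_k$ take values in the compact manifold $\tilde E$ and are uniformly bounded in $C^m$ on compacts of $\partial\D\setminus\{1\}$ by Lemma~\ref{AlEst1}, so a subsequence converges to a lift $\tilde f_b$ of $f|_{\partial\D\setminus\{1\}}$; this $f$ is the required adapted A-disc, proving~(i).

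For~(ii), assume $E$ is Lagrangian. Within the connected component $F$ the boundary lifts represent a single homotopy class in the loop space of $\tilde E$; Stokes together with a local primitive of $\omega$ and the relation $\iota^*\omega=0$ shows that $\mathrm{area}(f_k)=\int_\D f_k^*\omega$ is uniformly bounded by a topological invariant of this class. Symplectic area is invariant under the biholomorphic reparametrizations $\phi_k$, and passing to the limit on compacts of $\overline\D\setminus\{1\}$ gives $\mathrm{area}(f)<\infty$. To confine the cluster set, suppose $q\in C(f,\partial\D)\setminus E$; compactness of $E$ yields a ball $B_r(q)$ disjoint from $E$, and Gromov's monotonicity inequality for $J$-holomorphic curves with totally real boundary, applied to each component of $f^{-1}(B_r(q))$ whose closure contains $q$, forces a fixed lower area bound $\e_0>0$. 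Since $q$ is a cluster value, infinitely many such components must exist, contradicting $\mathrm{area}(f)<\infty$. Hence $C(f,\partial\D)\subset E$, and therefore $\partial C=\overline C\setminus C\subset E$. The main obstacle lies in this last step -- ruling out escape of the bubble away from $E$ at the puncture $1$; the tool is the monotonicity inequality in the tame-$J$ setting, available via the elliptic estimates recalled after Lemma~\ref{AlEst1}.
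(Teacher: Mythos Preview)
For part~(i) your argument is essentially the paper's: you enlarge the disc space to pairs $(f,\tilde f_b)$ recording a boundary lift through $\iota$, note that Lemma~\ref{AlEst1} is purely local and therefore transfers to the immersed setting, and rerun the Fredholm/bubble dichotomy verbatim. This matches the paper's proof.

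For part~(ii) the approaches diverge. The paper does not argue via monotonicity; instead it uses that $M$ is Stein to fix a proper holomorphic embedding $\psi:M\hookrightarrow\cx^N$ and then invokes \cite[Thm~2]{Al}, which asserts that a bounded-area holomorphic disc $\tilde f=\psi\circ f$ with boundary in a compact set $K=\psi(E)$ is proper as a map $\D\setminus\tilde f^{-1}(K)\to\cx^N\setminus K$. The cluster-set inclusion $C(f,\partial\D)\subset E$ and the statement $\partial C\subset E$ follow immediately.

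Your monotonicity route has a gap. The assertion ``since $q$ is a cluster value, infinitely many such components must exist'' is not justified: all the points $\zeta_k\to1$ with $f(\zeta_k)\to q$ may lie in a \emph{single} connected component $U$ of $f^{-1}(B_r(q))$ whose closure in $\overline\D$ meets the puncture $\{1\}$. For such $U$ the restriction $f|_U:U\to B_r(q)$ is not proper (nothing controls $f$ along sequences in $U$ tending to $1$), so Gromov's monotonicity inequality gives no lower bound on $\mathrm{area}(f|_U)$, and there is no contradiction. Closing this gap requires an additional ingredient---typically a length--area estimate on the arcs $\{|\zeta-1|=\delta\}\cap\D$ to produce, for a sequence $\delta_j\to0$, components compactly contained in $\D$---which is precisely the analytic content of Alexander's theorem that the paper invokes rather than reproves.
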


\begin{proof}[Proof of Theorem~\ref{GluingDisc3}.] 
We first deal with part (i).  Fix a point $p = {\iota}(\tilde p) \in E$ which is not a self-intersection point  and  
fix also a  non-integer $r > 1$. Consider the set of pairs 
\begin{equation}\label{mnfd-URS}
{\mathcal F} = \left\{(f,f_b)\in C^{r+1}(\D, M)\times  C^{r+1}(\partial \D, \tilde E): 
f(\partial \D) \subset E,\ f(1) = p,\ {\iota} \circ f_b = f|_{\partial \D} \right\}.
\end{equation}
In other words, together with a (not necessarily complex) disc $f$ we specify a lift of its boundary  to the source manifold $\tilde E$. For brevity we write $f$ instead of $(f,f_b)$.

Denote by  $F$  an open subset of ${\mathcal F}$ which consists of $f$ homotopic to a constant map
 $f^0 \equiv p$ in ${\mathcal F}$. It is well-known that $F$ is a $C^\infty$-smooth complex Banach manifold. We define now $G$ and $H$ as above. Note that  $H$ is a connected submanifold of $F \times G$.  

 An immediate 
but crucial for our goals observation is   that the proof of Lemma \ref{AlEst1} is {\it purely local}, i.e., all estimates and the convergence are established in a neighbourhood of a given boundary point of a disc. This local character of Lemma~\ref{AlEst1} allows us to pass automatically from an  embedded $E$ to 
a globally immersed $E = (\tilde E,\iota)$ in Lemma~\ref{AlEst1}. Indeed, suppose that $q$ is a self-intersection point of $E$ and 
$f(\zeta_0) = q$ for some $\zeta_0\in\partial\D$. It follows from the uniform convergence of the sequence $(f_k)$ 
and the definition of the set ${\mathcal F}$ that there exists a neighbourhood $U$ of $\zeta_0$ such that 
$f(U \cap \partial\D)$ and after passing to a subsequence, $f_k(U \cap \partial\D)$ belong to the same smooth 
component through $p$ of the immersed manifold $E$. This reduces the situation to the embedded case of Gromov's compactness theorem.

The canonical projection $\pi: H \to G$ has the same properties as in the embedded case \cite{Al,IvSh}. Arguing again  by contradiction, assume that an adapted  A-disc of class $C^{r+1}(\D)$ for $E$ does not exist. As above, in order to get a contradiction, we show that 
$\pi:H \to G$ is proper.

Suppose on the contrary,  that $\pi$ is not proper, and consider  a sequence $\{(f_k,g_k)\}\subset H$ 
 as above and the corresponding $M_k$ and $\lambda_k$.  If the sequence $(M_k)$ is bounded, then by 
 Lemma~\ref{AlEst1} the sequence $(f_k)$ converges, so we may assume that $M_k \to +\infty$. 
 
 Case 1. The sequence $(\lambda_k)$ converges to a point of $\D$. Then we obtain a contradiction as in the previous theorem.

Case 2. The sequence $(\lambda_k)$ converges to a point of $\partial \D$. Again, as in the previous proof, this
case can be handled using a normalization. It provides a noncompact sequence $(\phi_k)$ of automorphisms of $\D$
 such that $(\phi_k)$ converges uniformly on compacts subsets of $\overline\D \setminus \{ 1 \}$ to a constant map and such that the sequence $(\hat f_k \circ \phi_k)$ has uniformly bounded gradients on every compact subset of $\overline\D \setminus \{ 1 \}$. Hence we assume that it converges uniformly there. Recall that we are dealing with adapted discs; locally their boundaries are attached (along every sufficiently small open arc) to a single regular branch of $E$, which is an embedded manifold.
 Since Lemma~\ref{AlEst1} is local, it applies in our situation, which gives the 
convergence  also in the $C^{r+1}$-norm on every compact subset in 
$\overline\D\setminus \{1\}$ ( the intersection $K \cap \partial\D$ can be covered by a finite number of open arcs such that every arc is taken by the maps to a single regular branch of $E$). This is the key observation that makes Alexander's construction valid 
in the immersed case. 
 Since locally $E$ is an embedding and the limit  disc is adapted, it is $C^\infty$ smooth on 
$\overline\D \setminus \{ 1 \}$ by the boundary regularity theorem for complex discs with (embedded) totally real boundary value conditions (see, for example, \cite{IvSu,Si}). Therefore, the limit disc is  an adapted A-disc of the form $(const,f)$ for $\hat E$. Then $f$ is an adapted A-disc for $E$.
This contradiction proves 
that $\pi$ is proper and completes the proof of (i).
 
 \smallskip
 
As for (ii),  we obtain that the constructed in part (i)  A-disc $f$ is of 
bounded area. Since $M$ is Stein, there exists a holomorphic proper embedding  $\psi:M \to \cx^n$, see \cite{CiEl,Fo}. Let $\tilde f = \psi \circ f$. Then \cite[Thm 2]{Al} implies that $\tilde f: \D\setminus \tilde f^{-1}(\psi(E)) \to \cx^n \setminus \psi(E)$ is a proper 
map. This implies (ii) and completes the proof. 
\end{proof}

\medskip

Let $K$ be a compact subset in a complex manifold $M$. Its plurisubharmonically (p.s.h.) convex  hull is defined by
$$
\hat K^{psh}_M =\{ p \in M \vert u(p) \leq \sup_K u \,\, 
\mbox{for all continuous p.s.h. functions}\,\, u:M \to \rl \} .
$$
$K$ is called p.s.h.-convex in $M$ is $\hat K^{psh}_M = K$, see, for example, \cite{CiEl,St}.

\begin{cor}
\label{CorHulls}
Let $E$ be a compact totally real immersion of dimension $n$ in an admissible  Stein manifold $M$ of complex dimension $n$. Then $E$ is not p.s.h. convex.
\end{cor}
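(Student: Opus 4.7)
The plan is to produce a nonconstant holomorphic disc whose image sits inside $\hat E^{psh}_M$ but cannot sit inside $E$ itself. First I would apply Theorem~\ref{GluingDisc3}(i) to obtain an adapted nonconstant A-disc $f:\D\to M$ with $f(\partial\D\setminus\{1\})\subset E$; this is exactly what that theorem delivers in the admissible Stein setting.

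Next I would verify that $f(\D)\subset \hat E^{psh}_M$. For any continuous plurisubharmonic $u$ on $M$, the composition $v:=u\circ f$ is subharmonic on $\D$; since $f$ is nearly smooth, the image $f(\D)$ is relatively compact in $M$, so $v$ is bounded, and because $f$ extends continuously to $\partial\D\setminus\{1\}$ with values in $E$, one has $\limsup_{\zeta\to\zeta_0}v(\zeta)\leq \sup_E u$ for every $\zeta_0\in\partial\D\setminus\{1\}$. The exceptional point $\{1\}$ is polar, hence a removable singularity for the bounded subharmonic function $v$, so the maximum principle yields $v\leq \sup_E u$ throughout $\D$. Taking the supremum over $u$ gives $f(\D)\subset \hat E^{psh}_M$.

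Third, I would derive a contradiction from assuming that $E$ is p.s.h.\ convex, so $\hat E^{psh}_M=E$ and hence $f(\D)\subset E$. At any $\zeta_0\in\D$ with $df(\zeta_0)\ne 0$ the image $C:=f(\D)$ is locally a smooth real two-dimensional submanifold of $M$ whose tangent plane at $f(\zeta_0)$ is the complex line $df(T_{\zeta_0}\D)$. Since $\tilde E$ is compact and $\iota$ is an immersion, $E$ is locally a finite union of smooth embedded totally real sheets; a Baire category argument then forces some such sheet to contain a two-dimensional open piece of $C$, and the tangent space to that sheet at any such point would contain a $J$-invariant complex line, contradicting total reality. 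Thus $df\equiv 0$, making $f$ constant, which contradicts the A-disc conclusion of Theorem~\ref{GluingDisc3}(i).

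The main obstacle is this last step: around a self-intersection of $\iota$ the ``tangent space'' of $E$ is not a single linear subspace, so a little care is needed to identify the complex tangent line of $C$ with the tangent space of one particular sheet. The immersion hypothesis (finiteness of fibres and the local union-of-sheets decomposition) together with the Baire argument is precisely what reduces the immersed situation to the classical obstruction against nonconstant holomorphic discs inside a single totally real submanifold.
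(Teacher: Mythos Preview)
Your argument is correct and follows exactly the route the paper takes: produce an A-disc via Theorem~\ref{GluingDisc3}(i), use the maximum principle (with the point $1$ removed as a polar set) to place $f(\D)$ in $\hat E^{psh}_M$, and conclude because a totally real set cannot contain a nonconstant holomorphic curve. The paper compresses all of this into one sentence, whereas you have spelled out the removable-singularity step at $\zeta=1$ and the Baire/sheet argument needed in the immersed case---both are the natural details behind the paper's ``by the maximum principle'' and ``of course''.
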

This follows from (i) Theorem~\ref{GluingDisc3} because by the maximum principle for subharmonic functions an A-disc is contained in the p.s.h. convex hull of $E$ (of course, $E$ does not contain nonconstant holomorphic curves since it is totally real). 

\medskip

{\bf Remarks and comments. } \begin{itemize}
\item[(1)] Theorem \ref{GluingDisc2} and the part (i) of Theorem~\ref{GluingDisc3} remain true for an almost complex manifold 
$(M,\omega,J)  = \cx \times X$ with a symplectic form $\omega$ taming $J$, if it satisfies the assumptions (A1) and (A2), and, if $(X,J_X)$ admits, for example, a strictly p.s.h. function. Indeed, all proofs go through without modifications. The existence of a strictly p.s.h. function implies that a bounded holomorphic map from $\cx \setminus \{ 0 \}$ to $X$ is constant; we used this property considering Case 1 of the above proof. It would be interesting to extend part (ii) of Theorem~\ref{GluingDisc3} to the almost complex case.
\item[(2)] Corollary \ref{CorHulls} is well-known in the case when $M = \cx^n$ and $E$ is a smooth (or even topological) submanifold,  see \cite{CiEl,St}; for totally real immersions in $\cx^n$ it is obtained in \cite{ShSu}.
\item[(3)] It is  well-known that there exist compact totally real manifolds (for example, some $n$-tori in  
$\cx^n$) which do not contain the whole boundary of a nonconstant complex disc, see Alexander \cite{Al2} and Duval-Gayet \cite{DuGa}. An A-disc for such a manifold necessarily has infinite area.
\item[(4)] Ivashkovich and Shevchishin \cite{IvSh} proved the existence of a complex disc $f$ attached to 
an immersed Lagrangian manifold $E$ under the assumption of weak transversality of $E$; in particular, this assumption  holds for transverse double intersections. Their approach follows the original work of Gromov. They proved a  general version of the compactness theorem (including the reflection principle and the removal of singularities) for $J$-complex curves with boundaries glued to a Lagrangian immersion with weakly transverse self-intersections. Their method also works  for  some symplectic manifolds of the form $\cx \times X$ with   tamed almost complex structures satisfying (A1),(A2). 
\item[(5)] It seems quite possible that our results can be extended to  a wider class of Stein manifolds than the one of  admissible Stein manifolds. On the other hand it is clear that some restrictions on a class of Stein manifolds are necessary. Indeed, let $E = \{ z=(z_1,z_2) \in \cx^2 \vert \,\, \vert z_j \vert = 1 , j=1,2 \}$ be the standard torus in $\cx^2$. The function $\rho(z) = dist(z,E)^2$ (the usual Euclidean distance) is strictly p.s.h. in a neighbourhood of $E$ and $M = \{ z: \rho(z) < \varepsilon \}$ is a Stein manifold for $\varepsilon > 0$ small enough. It follows by the maximum principle that every nearly smooth complex disc in $M$ with boundary attached to $E$, is constant. 
\item[(6)] Duval and Sibony \cite{DuSi}   showed how to use an A-disc in order to construct a positive closed current of bidimension (1,1) and of finite mass with the support contained in the polynomially convex hull of a totally real submanifold of $\cx^n$ (their result also holds for totally real immersions \cite{ShSu}). Using methods of symplectic topology, Viterbo \cite{Vi} proved that a totally real submanifold in an $n$-dimensional Stein manifold admitting an exhaustion strictly p.s.h. function with critical points of Morse index $< n$, contains the boundary of a complex curve.
\item[(7)] Let $E$ be a compact subset of $\cx^n$ and $p$ be a point in the polynomially convex hull of $E$. A number of papers is devoted to the construction of a holomorphic disc $f$
centred at $p$  with (a part of) the boundary contained in a prescribed neighbourhood of  $E$.
The first result of this type is due to Poletsky \cite{Po}. It was extended in several directions by Larusson-Sigurdsson \cite{LaSi}, Rosay \cite{Ro}, Drinovec-Drnovsek and Forstneri\v c \cite{DrFo}, Bertrand-Kuzman \cite{BeKu}, and other authors.
\end{itemize}


\section{Lagrangian immersions to Stein manifolds}

We begin with

\begin{defn} 
A closed subset $S$ of a complex manifold $M$ is called locally p.s.h. convex near a point $p \in X$ if there exists a Stein  neighbourhood $U$ of $p$ such that for every sufficiently small $\e> 0$ the intersection $S \cap \overline{\B(p,\e)}$ is p.s.h.-convex in $U$.
\end{defn} 

Our next result establishes the local p.s.h.-convexity near  transverse self-intersection of Lagrangian immersions.

\begin{prop}\label{InterTheo}
Let  $(M,\omega,J)$ be a Stein manifold of complex dimension $n$. Assume that  $L_1$ and $L_2$ are smooth 
Lagrangian submanifolds intersecting transversely at a point $p$. 
Then the union $(L_1 \cup L_2)$ is locally p.s.h.-convex near $p$. 
\end{prop}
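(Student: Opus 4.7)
The plan is to reduce the proposition to a statement about pairs of transverse linear Lagrangian subspaces in $\cx^n$, and to settle that linear case using a classical polynomial-convexity criterion due to Weinstock.

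\emph{Step 1: Local holomorphic chart.} Since $M$ is Stein, there is a biholomorphism $\varphi$ from a neighborhood $U$ of $p$ onto a ball $\B \subset \cx^n$ centered at $0$ with $\varphi(p) = 0$. Under $\varphi$, the two Lagrangians become smooth transverse totally real submanifolds $\varphi(L_1), \varphi(L_2) \subset \B$, and the tangent planes $\Lambda_i := T_0 \varphi(L_i)$ are transverse Lagrangian subspaces with respect to $\omega'_0 := \varphi_*\omega|_0$, which tames $J_{st}$. Since polynomial convexity in $\cx^n$ implies p.s.h.\ convexity inside any Stein neighborhood (in particular inside $\B$), it suffices to establish that $\varphi(L_1 \cup L_2)$ is locally polynomially convex at $0$.

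\emph{Step 2: Reduction to the linear model.} Invoking the standard perturbation principle for unions of transverse totally real submanifolds (of the kind exploited in \cite{ShSu}), the local polynomial convexity of $\varphi(L_1 \cup L_2)$ at $0$ is equivalent to that of the union of tangent planes $\Lambda_1 \cup \Lambda_2$ at $0$: near $0$ each $\varphi(L_i)$ is $C^2$-close to $\Lambda_i$, and local polynomial convexity of such transverse totally real configurations is stable under $C^2$-small perturbations.

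\emph{Step 3: Weinstock's criterion in the Lagrangian case.} Normalize via a $\cx$-linear change so that $\Lambda_1 = \rl^n$; then $\Lambda_2 = (I + iA)\rl^n$ for a real invertible matrix $A$. Weinstock's theorem asserts that $\rl^n \cup (I+iA)\rl^n$ is locally polynomially convex at $0$ iff the matrix $(I+iA)(I-iA)^{-1}$ has no negative real eigenvalue. The Lagrangian condition (after appropriate normalization of $\omega'_0$) forces $A$ to be real symmetric, so that diagonalizing $A$ yields eigenvalues $\mu_j = (1+i\lambda_j)/(1-i\lambda_j)$ of modulus one, non-real for $\lambda_j \neq 0$ (the transversality). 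In particular no $\mu_j$ equals $-1$ nor any other negative real number, so Weinstock's criterion is satisfied and the linear model is locally polynomially convex.

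The main obstacle I anticipate lies in completing Step~3 for a general taming form $\omega'_0 \neq \omega_{st}$, i.e.\ showing that the ``$A$ symmetric'' conclusion still holds in that setting. I would address this either by performing a further $\cx$-linear adjustment bringing $\omega'_0$ to $\omega_{st}$ (through a Hermitian structure compatible with the $J$-invariant part of $\omega'_0$), or by a direct computation expressing the Lagrangian condition with respect to $\omega'_0$ as a constraint on $A$ which forces the spectral hypothesis of Weinstock to hold automatically.
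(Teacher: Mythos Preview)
Your Steps 1 and 2 coincide with the paper's reduction: pass to a local holomorphic chart so that $J=J_{st}$ on a ball, then reduce to the pair of tangent planes via the perturbation principle from \cite{ShSu}. The divergence is entirely in Step 3.

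The paper does \emph{not} invoke Weinstock. Instead it argues by contradiction: if $E_1\cup E_2$ were not polynomially convex, a standard hull argument produces a nonconstant holomorphic annulus with one boundary circle on each $E_j$; rescaling by a small $\delta>0$ keeps it inside the ball. Taming gives strictly positive $\omega$-area, while the Lagrangian condition makes a primitive $\lambda$ of $\omega$ exact on each $E_j$, so Stokes forces the area to be independent of $\delta$, hence zero --- a contradiction. This argument uses only that $\omega$ tames $J_{st}$ and that the $E_j$ are $\omega$-Lagrangian linear subspaces; no normalization of $\omega$ is needed.

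Your Weinstock route, by contrast, runs into exactly the obstacle you flag, and your first proposed fix does not work: a $\cx$-linear map preserves the decomposition of $\omega'_0$ into its $J_{st}$-invariant and $J_{st}$-anti-invariant parts, so if $\omega'_0$ has a nonzero $(2,0)+(0,2)$ component you cannot send it to $\omega_{st}$ by any $\cx$-linear change. After normalizing $\Lambda_1=\rl^n$ the Lagrangian condition for a general taming $\omega'_0$ becomes $BA-(BA)^T + A^TCA = 0$ for suitable real matrices $B$ (symmetric part positive definite) and antisymmetric $C$, which does not obviously force $A$ symmetric nor the spectral condition in Weinstock's criterion. Your second suggestion (a direct computation) may ultimately succeed --- it must, since the paper's argument shows the conclusion is true --- but you have not carried it out, and it is not routine. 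The paper's Stokes/annulus argument sidesteps this algebra entirely and is the cleaner way to close the gap.
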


\begin{proof}[Proof of Proposition~\ref{InterTheo}]  
The proof can be reduced to the case of $\cx^n$ considered in \cite{ShSu}.
 In local holomorphic coordinates, we can identify $p$ with the origin, and view $M$ as an open ball $\B(0,\varepsilon)$ equipped with the standard complex structure $J_{st}$, where $\varepsilon > 0$ is small enough. Consider the tangent spaces  $E_j = T_0 L_j$, $j=1, 2$. 
  
 \begin{lemma}
The union $E_1 \cup E_2$ is polynomially convex in $\cx^n$.
\end{lemma}

This result was proved in \cite{ShSu} for the case where $E_j$ are Lagrangian spaces with respect to the standard symplectic structure $\omega_{st}$. The same argument holds in our case of general $\omega$ taming $J_{st}$.

\begin{proof} 
If the union $E_1 \cup E_2$ is not polynomially convex in $\cx^n$, there exists a nonconstant 
holomorphic annulus $f$ with the boundary attached to $E_1 \cup E_2$, see~\cite{ShSu} for details. 
This is just a nonconstant  map $f: \Omega \to \cx^n$, holomorphic on the closed annulus 
$\Omega= \{\zeta \in \cx \vert \,\, r_1 \le \vert \zeta \vert \le r_2 \}$ and such that  
$f(r_j\partial\D) \subset E_j$, $j=1,2$; here $0 < r_1 < r_2$. For every $\delta > 0$, the 
 annulus  $\delta f$ also is glued to $E_1 \cup E_2$. Choosing $\delta$ small enough we can assume that $\delta f$ is contained in $\B(0,\varepsilon)$. Since $J_{st}$ is tamed by $\omega$, the symplectic area of $\delta f$  defined by (\ref{area}) (with $\Omega$ instead of $\D$) is strictly positive. Let a 1-form $\lambda$ be a primitive of $\omega$ in $\B(0,\varepsilon)$. Since $E_j$ are Lagrangian spaces, the restrictions 
 $\lambda\vert_{E_j}$, $j=1, 2$, are exact. Then by Stokes' formula the area of $\delta f$ 
 is independent of $\delta$ and therefore is equal to zero. This is a contradiction.
 \end{proof}
Then, by \cite{ShSu}, for every $\varepsilon > 0$ small enough the set $(L_1 \cup L_2) \cap \overline{\B(0,\varepsilon)}$ is  polynomially convex in $\cx^n$. Hence, there exists a smooth 
nonnegative p.s.h. function $\rho$ on $\cx^n$, strictly p.s.h. on 
$\cx^n \setminus (L_1 \cup L_2 \cap \overline{\B(0,\varepsilon)})$, such that $(L_1 \cup L_2)  \cap \overline{\B(0,\varepsilon)} = \rho^{-1}(0)$, see \cite[Theorem 1.3.8]{St}. Transporting this function 
to a neighbourhood of $p$ in $M$ by a local holomorphic chart, we obtain a function with similar properties near $p$ on $M$. This is equivalent to local p.s.h.-convexity of $L_1 \cup L_2$ (see \cite[Prop. 5.13]{CiEl}).
\end{proof}

Now arguing literally as in \cite{ShSu} we obtain the following result:

\begin{theorem}\label{continuity10}
Suppose that a smooth compact Lagrangian immersion $L$  to an admissible Stein manifold $M$ has a finite number of self-intersection points  and is locally p.s.h. convex near every self-intersection point. Then there exists a nonconstant complex  disc continuous on $\overline{\D}$ with boundary attached to~$L$.
\end{theorem}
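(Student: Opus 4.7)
The plan is to reduce the theorem to Theorem \ref{GluingDisc3}(ii), which produces an adapted A-disc $f$ for $L$ of bounded area, smooth on $\overline\D\setminus\{1\}$, with $f(\partial\D\setminus\{1\})\subset L$ and cluster set $C(f,\partial\D)\subset L$. What remains is to show that $f$ extends continuously at $\zeta=1$, i.e., that $K:=C(f,\{1\})$ is a single point of $L$. Standard arguments (continuity of $f$ on $\overline\D\setminus\{1\}$ plus compactness) identify $K$ as a nonempty connected compact subset of $L$. Since the self-intersection set $\Sigma$ of $L$ is finite, either $K\cap\Sigma=\emptyset$ or $K$ meets $\Sigma$ in finitely many points.

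In the first case, $L$ is, near every point of $K$, an embedded smooth totally real submanifold, and Gromov's boundary regularity together with his removable singularity theorem for $J$-complex discs of bounded area with smooth totally real boundary (see \cite{IvSu}) extend $f$ as a smooth map across $\zeta=1$. In the second case I would follow \cite{ShSu} essentially verbatim, substituting local p.s.h. convexity for the polynomial convexity used there. For each $p\in K\cap \Sigma$, the hypothesis supplies a Stein neighbourhood $U$ of $p$ and $\e>0$ such that $L\cap \overline{\B(p,\e)}$ is p.s.h.-convex in $U$. By \cite[Thm 1.3.8]{St}, applied in a local holomorphic chart, one obtains a continuous nonnegative p.s.h.\ function $\rho$ on $U$ that vanishes precisely on $L\cap \overline{\B(p,\e)}$ and is strictly p.s.h.\ elsewhere. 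On each connected component of $f^{-1}(U)\cap\D$ the composition $\rho\circ f$ is subharmonic, vanishes on the part of the component's boundary that lies in $\partial\D$, and is bounded on the remaining boundary by $\sup_{\partial \B(p,\e)}\rho$. Because $f$ has bounded area by Theorem \ref{GluingDisc3}(ii), the \cite{ShSu} maximum-principle argument for the barrier $\rho\circ f$ rules out that $K$ accumulates at more than one point near $p$.

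The main obstacle I expect is the global coherence of these local barriers, since \emph{a priori} the connected cluster set $K$ could contain both self-intersection points and smooth points of $L$, and one must verify that the local p.s.h.\ barriers at $\Sigma$-points and the smooth totally real boundary regularity at non-singular points of $K$ combine to collapse all of $K$ to a single point. The bounded-area conclusion of Theorem \ref{GluingDisc3}(ii) is essential here: it excludes energy concentration at $\zeta=1$ and thereby prevents a bubble from spoiling the local analysis, just as in the polynomially convex setting of \cite{ShSu}. Once $K$ is shown to be a single point, the continuity of $f$ on $\overline\D$ follows and the proof is complete.
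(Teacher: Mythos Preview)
Your reduction to Theorem~\ref{GluingDisc3}(ii) and your plan to use p.s.h.\ barriers coming from the local convexity hypothesis are exactly the paper's route. The gap is the one you yourself flag: you treat the barriers as separate local objects near each self-intersection point and leave their ``global coherence'' unresolved. The paper does not work locally or split into cases on $K\cap\Sigma$. Via \cite[Prop.~5.13]{CiEl} (rather than \cite[Thm~1.3.8]{St}), each self-intersection point $p$ carries a neighbourhood $U$ and a \emph{smooth} nonnegative p.s.h.\ function $\rho_p$ on $U$, strictly p.s.h.\ on $U\setminus L$, with $L\cap U=\rho_p^{-1}(0)$; near the smooth points of $L$ such functions exist trivially. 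These local data are then \emph{glued}, exactly as in Section~5 of \cite{ShSu}, into a single nonnegative p.s.h.\ function $\rho$ defined on a full neighbourhood of $L$ with $L=\rho^{-1}(0)$. With this one global barrier in hand, the bounded-area A-disc from Theorem~\ref{GluingDisc3}(ii) feeds directly into the continuity argument of \cite{ShSu}, and your two-case analysis becomes unnecessary.

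So your proposal is on the right track but stops short precisely where you suspected; the missing ingredient is the gluing of the local p.s.h.\ barriers into a global one, which is what Section~5 of \cite{ShSu} actually supplies and what the paper invokes.
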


Indeed, by \cite[Prop. 5.13]{CiEl}, for every self-intersection point there exists a neighbourhood $U$ and a smooth positive p.s.h. function $\rho$ on $U$, strictly p.s.h. on $U \setminus L$ and such that $L\cap U = \rho^{-1}(0)$. Then similarly to Section 5 of \cite{ShSu}, these functions can be glued to a global p.s.h. function in a neighbourhood of $L$. Together with Theorem \ref{GluingDisc3}(ii) this implies the continuity of a complex disc up to the boundary quite similarly to Section 5 of \cite{ShSu}.

In the case of  Lagrangian embeddings we again recover the result of Gromov~\cite{Gr}. 
In view of Proposition~\ref{InterTheo} we have the following

\begin{cor}
\label{continuity20}
Let  $L$ be a smooth compact Lagrangian immersion to an admissible Stein manifold  $M$ with  a finite number of double transverse self-intersection points. 
Then there exists a nonconstant complex  disc  continuous on $\overline{\D}$ with the boundary attached to $L$.
\end{cor}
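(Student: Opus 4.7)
The plan is to verify that the hypotheses of Theorem~\ref{continuity10} are automatically satisfied in this more restrictive setting, and then to quote that theorem directly.

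First I would fix an arbitrary self-intersection point $p$ of $L$. Since by assumption $L$ has only finitely many double transverse self-intersections, in a small neighbourhood of $p$ the image of $L$ is precisely the union of two smooth embedded Lagrangian branches $L_1, L_2 \subset M$ passing through $p$ and meeting transversely. This is exactly the configuration to which Proposition~\ref{InterTheo} applies, yielding that the germ $L_1 \cup L_2$ is locally p.s.h.-convex near $p$. Since this holds at every self-intersection point, and the self-intersection set is finite by hypothesis, $L$ is locally p.s.h.-convex near each of its self-intersection points.

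With this established, Theorem~\ref{continuity10} applies verbatim to $L$ and produces a nonconstant complex disc $f\in C(\overline{\D},M)$ with $f(\partial\D)\subset L$, which is precisely the statement of the corollary.

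The proof is therefore essentially a concatenation of Proposition~\ref{InterTheo} and Theorem~\ref{continuity10}, and no new analytic input is needed. The only minor point requiring care is a tautological identification: the ``local p.s.h.-convexity of $L_1\cup L_2$ near $p$'' supplied by Proposition~\ref{InterTheo} refers to the same germ of closed set in $M$ as the ``local p.s.h.-convexity of $L$ near $p$'' required by Theorem~\ref{continuity10}, so the hypothesis transfers without adjustment. All genuine work was already done in the two cited results: Proposition~\ref{InterTheo} handled the convex-hull argument in the tangent model via Stokes' theorem together with the polynomial-convexity result imported from~\cite{ShSu}, while Theorem~\ref{continuity10} handled the gluing of local defining functions and the removable-singularity passage from the bounded-area A-disc of Theorem~\ref{GluingDisc3}(ii) to a disc continuous on $\overline{\D}$.
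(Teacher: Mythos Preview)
Your proposal is correct and matches the paper's own argument exactly: the corollary is stated ``in view of Proposition~\ref{InterTheo}'' as an immediate consequence of Theorem~\ref{continuity10}, and you have simply spelled out that implication. No additional ideas are needed, and your remark on the identification of the germ of $L$ with $L_1\cup L_2$ near $p$ is an appropriate (if minor) clarification.
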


This result is also a consequence of Ivashkovich-Shevchishin \cite{IvSh}.
Note that Theorem \ref{continuity10}  works in some cases when the result of Ivashkovich-Shevchishin cannot be applied. Indeed, a Lagrangian immersion can be locally p.s.h. convex but not weakly transversal in the sense of \cite{IvSh}, see examples in \cite{ShSu}. It remains an open question whether Corollary \ref{continuity20} holds without any assumption 
on the set of self-intersection points (as Gromov suggested in \cite{Gr2}).



\begin{thebibliography}{12}



\bibitem{Al} Alexander, H. {\it Gromov's method and Bennequin's problem},
  Invent. math. {\bf 125} (1996), 135-148.
  
  \bibitem{Al2} Alexander, H. {\it Disks with boundaries in totally real and Lagrangian manifolds} Duke Math. J. {\bf 100} (1999), 131-138.


\bibitem{BeKu} Bertrand, F., Kuzman, U. {\it Gluing techniques and envelopes of disc functionals on almost complex manifolds}, preprint, arXiv:1412.5468.


\bibitem{CiEl} Cieliebak, K., Eliashberg, Y.  From Stein to Weinstein and back, Amer. Math. Soc. Colloquium Publ.AMS, Providence, R.I. 2012, xi+364 pp.

\bibitem{DrFo} Drinovec-Drnovsek, B., Forstneri\v c, F. {\it Characterizations of projective hulls by analytic discs}, Illinois J. Math. {\bf 56} (2012),53-65.


\bibitem{DuSi} Duval, J., Sibony, N. {\it Hulls and positive closed currents}, Duke Math. J. {\bf 95} (1998),  621-633.

\bibitem{DuGa} Duval, J., Gayet, D. {\it Riemann surfaces and totally real tori}, Comment. Math. Helv. 
{\bf 89} (2014), 299-312.


\bibitem{ElGr} Eliashberg, Y., Gromov, M. {\it Convex symplectic manifolds}, Proc. Symp. Pure Math.
{\bf 52} (1991), Part 2, 135-162.

\bibitem{Fo} Forstneri\v c, F. Stein manifolds and holomorphic mappings. The homotopy principle in complex analysis.
Springer, Heidelberg 2011, xii+ 489 pp. 





\bibitem{Gr} Gromov, M. {\it Pseudoholomorphic curves in symplectic manifolds}, Invent. math. {\bf 82} (1985), 307-347.

\bibitem{Gr2} Gromov, M. {\it Soft and hard symplectic geometry}, Proceedings of the International Congress of Mathematicians, Vol. 1, 2 (Berkeley, Calif., 1986), 81-98, Amer. Math. Soc., Providence, RI, 1987.    


\bibitem{IvSh} Ivashkovich S., Shevchishin V. {\it Reflection principle and $J$-complex curves with
 boundary on totally real immersions}, Comm. in Cont. Math.
{\bf 4} (2002), 65-106.


\bibitem{IvSu} Ivashkovich S., Sukhov A. {\it Schwarz reflection principle, boundary regularity and compactness for J-complex curves}, Ann. Inst. Fourier {\bf 60}(2010), 1489-1513.

\bibitem{LaSi} Larusson, F., Sigurdsson, R. {\it Plurisubharmonic functions and analytic discs on manifolds}, J. Reine Angew. Math. {\bf 501} (1998), 1-39.


\bibitem{MS} McDuff D., Salomon D., $J$-holomorphic curves and symplectic topology, AMS, Coll. Publ. Vol. 52. 2004.

\bibitem{NiWo} Nijenhuis A., Woolf W., {\it Some integration problems in almost-complex and complex manifolds}, Ann. Math. {\bf 77} (1963), 424-489.

\bibitem{Po} Poletsky, E. {\it Holomorphic currents}, Indiana Univ. Math. J. {\bf 42} (1993), 85-144.

\bibitem{Ro} Rosay, J.-P. {\it Poletsky theory of discs on holomorphic manifolds}, Indiana Univ. Math. J 
{\bf 52} (2003), 157-169.

\bibitem{ShSu} Shafikov R., Sukhov, A. {\it Polynomially convex hulls of singular real manifolds}, 
arXiv:1302.5613v3, to appear in Trans. Amer. Math. Soc.

\bibitem{Si} Sikorav, J.-C. {\it Some properties of holomorphic curves in almost complex manifolds}, in Ed. M.Audin, J.Lafonten, "Holomorphic curves in symplectic geometry", Progress in Math. 117, Birkhauser, 1994, 165-189.

\bibitem{St} Stout, E. L. Polynomial convexity. Progress in Mathematics, 261. Birkh\"auser Boston, Inc., Boston, MA, 
2007. xii+439 pp.

\bibitem{Vi} Viterbo, C. {\it Functors and computations in Floer homology with applications. I.} GAFA 
{\bf 9} (1999), 985-1033.
  
\end{thebibliography}
\end{document}